\numberwithin{equation}{section}
\numberwithin{subsection}{section}
\newtheorem*{namedtheorem}{\theoremname}
\newcommand{\theoremname}{testing}
\newtheorem{theorem}{Theorem}
\newtheorem{proposition-definition}[theorem]
{Proposition-Definition}
\newtheorem{lemma}[theorem]{Lemma}
\newtheorem*{theorem*}{Theorem}
\theoremstyle{definition}
\newtheorem*{question*}{Question}
\theoremstyle{remark}
\renewcommand{\mathcal}{\mathscr}
\newcommand\cC{\mathcal{C}} 
\newcommand\cG{\mathcal{G}}
 \newcommand\cN{\mathcal{N}}
 \newcommand\cP{\mathcal{P}}
\newcommand\CC{\mathbb{C}}
 \newcommand\PP{\mathbb{P}}
 \newcommand\RR{\mathbb{R}}
 \newcommand\ZZ{\mathbb{Z}}
\newcommand\bC{\mathbf{C}}
 \newcommand\bP{\mathbf{P}}
\newcommand\arr{\ifinner\to\else\longrightarrow\fi}
\newcommand\arrto{\ifinner\mapsto\else\longmapsto\fi}
\def\displaytimes_#1{\mathrel{\mathop{\times}\limits_{#1}}}
\def\displayotimes_#1{\mathrel{\mathop{\bigotimes}\limits_{#1}}}
\newcommand\aut{\operatorname{Aut}}
\newcommand\spec{\operatorname{Spec}}
\newcommand{\underaut}{\mathop{\underline{\mathrm{Aut}}}\nolimits}
\newlength{\ignora}
\renewcommand{\setminus}{\smallsetminus}
\newcommand{\GL}{\mathrm{GL}}
\newcommand{\PGL}{\mathrm{PGL}}
\newcommand{\gal}{\operatorname{Gal}}
\DeclareFontFamily{U}{mathx}{\hyphenchar\font45}
\DeclareFontShape{U}{mathx}{m}{n}{
      <5> <6> <7> <8> <9> <10>
      <10.95> <12> <14.4> <17.28> <20.74> <24.88>
      mathx10
      }{}
\DeclareSymbolFont{mathx}{U}{mathx}{m}{n}
\DeclareMathAccent{\widecheck}{0}{mathx}{"71}
\DeclareMathAccent{\wideparen}{0}{mathx}{"75}
\renewcommand{\epsilon}{\varepsilon}
\newcommand{\cha}{\operatorname{char}}
\begin{document}

\title{The field of moduli of sets of points in $\PP^{2}$}

\author{Giulio Bresciani}

\begin{abstract}
	For every $n\ge 6$, we give an example of a finite subset of $\mathbb{P}^{2}$ of degree $n$ which does not descend to any Brauer-Severi surface over the field of moduli. Conversely, for every $n\le 5$ we prove that a finite subset of degree $n$ always descends to a $0$-cycle on $\mathbb{P}^{2}$ over the field of moduli.
\end{abstract}

\address{Scuola Normale Superiore\\Piazza dei Cavalieri 7\\
56126 Pisa\\ Italy}
\email{giulio.bresciani@gmail.com}


\maketitle

Let $k$ be a field with separable closure $K$, and $S\subset\PP^{2}(K)$ a finite subset of order $n$. The \emph{field of moduli} $k_{S}$ of $S$ is the subfield of $K$ of elements fixed by Galois automorphisms $\sigma\in\gal(K/k)$ such that $\sigma(S)$ is linearly equivalent to $S$. We study the problem of whether $S$ descends to a $0$-cycle on $\PP^{2}_{k(S)}$, or more generally on a Brauer-Severi surface over $k_{S}$.

A. Marinatto \cite{marinatto} studied the analogous problem over $\PP^{1}$. He showed that, if $n$ is odd or equal to $4$, then $S$ descends to a divisor over $\PP^{1}_{k_{S}}$. Furthermore, he has given counterexamples where $S$ does not descend to $\PP^{1}_{k_{S}}$ for every $n\ge 6$ even. All of his counterexamples descend to a Brauer-Severi curve, though. In \cite{giulio-divisor}, we have shown that if $n=6$ then $S$ always descend to some Brauer-Severi curve, while there are counterexamples for every $n\ge 8$ even. 

Fields of moduli of curves, possibly with marked points, received a lot of attention, see \cite{debes-douai} \cite{debes-emsalem} \cite{huggins} \cite{cardona-quer} \cite{kontogeorgis} \cite{hidalgo}. Furthermore, there are results about abelian varieties, most famously Shimura's result that a generic, principally polarized, odd dimensional abelian variety is defined over the field of moduli \cite{shimura}, and about fields of moduli of curves in $\PP^{2}$ \cite{roe-xarles} \cite{badr-bars-garcia} \cite{badr-bars} \cite{artebani-quispe} \cite{giulio-p2}. Here is our result.

\begin{theorem}\label{theorem:sets}
	Assume $\cha k\neq 2$. Let $S\subset\PP^{2}(K)$ be a finite set of $n$ points with field of moduli $k_{S}$. If $n\le 5$, then $S$ descends to a finite subscheme of $\PP^{2}_{k_{S}}$. For every $n\ge 6$, there exists a subset $S\subset\PP^{2}(\CC)$ with field of moduli equal to $\RR$ which does not descend to $\PP^{2}_{\RR}$. 
\end{theorem}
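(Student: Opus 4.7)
My plan is to split the theorem into its two halves: the positive direction for $n\le 5$ and the counterexamples over $\RR$ for $n\ge 6$. For the positive half, the unifying idea is to extract from $S$ a canonical rational datum defined over $k_{S}$ that reduces the descent problem to a simpler one. For $n\le 4$, every stratum of the moduli (by collinearity pattern) either is a single $\PGL_{3}$-orbit containing a $\QQ$-rational representative (for example $\{e_{1}, e_{2}, e_{3}, [1:1:1]\}$ in the generic $n=4$ case) or reduces to the $\PP^{1}$-setting via a canonical line, where Marinatto's theorem for $n\le 4$ applies; in either case descent to $\PP^{2}_{k_{S}}$ is immediate.

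For $n=5$ in general position, the five points lie on a unique smooth conic $C\subset\PP^{2}_{K}$, which is canonical in $S$. Every element of $\PGL_{3}$ preserving $S$ must also preserve $C$ (by uniqueness), so $\aut_{\PP^{2}}(S)\subset\aut_{\PP^{2}}(C)\cong\aut(C)\cong\PGL_{2}$; for a generic $S$ this group is trivial. Hence the moduli stack is a scheme at $[S]$, and the $k_{S}$-rational coarse point lifts to a descent $(X, T)$ over $k_{S}$, with $X$ a Brauer-Severi surface. The canonical conic $C$ then descends to a smooth conic $C_{0}\subset X$ carrying the $0$-cycle $T$ of degree $5$. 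Since the index of a conic divides $2$ and $5$ is odd, $C_{0}$ has index $1$, so $C_{0}\cong\PP^{1}_{k_{S}}$; then $X$ acquires a $k_{S}$-rational point, so $X\cong\PP^{2}_{k_{S}}$, giving the desired descent of $S$. Non-generic $n=5$ configurations are handled by the canonical collinearity decomposition: the collinearity structure is itself canonical, it splits $S$ into subsets of size $<5$ with field of moduli $\subseteq k_{S}$, and each piece descends by the $n\le 4$ analysis.

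For the negative direction I construct explicit counterexamples over $\RR$ for every $n\ge 6$. Since $\br(\RR)[3]=0$, every Brauer-Severi surface over $\RR$ is $\PP^{2}_{\RR}$, so ``$S$ does not descend to $\PP^{2}_{\RR}$'' is equivalent to the non-neutrality of the field-of-moduli $\aut_{\PP^{2}}(S)$-gerbe. I begin with $n=6$: take $S=H\cdot p\subset\PP^{2}(\CC)$ to be the orbit of a generic point $p$ under a carefully chosen finite subgroup $H\subset\PGL_{3}(\CC)$ of order $6$, arranged so that some $\phi\in\PGL_{3}(\CC)$ realizes $\phi(c(S))=S$ while $h_{0}:=c(\phi)\phi\in H$ is non-trivial in the non-abelian $H^{2}(\RR, H)$ with twisted conjugation action $h\mapsto\phi^{-1}c(h)\phi$. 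Concretely, I verify that no $h\in H$ satisfies $\phi^{-1}c(h)\phi\cdot h=h_{0}^{-1}$, so that no modification $\phi\mapsto\phi h$ yields a bona fide descent cocycle. For arbitrary $n\ge 6$, I extend by adjoining $n-6$ auxiliary points in very general position, each individually $c$-fixed (for example, $\QQ$-rational points of $\PP^{2}$ outside $S$), so that neither $\aut_{\PP^{2}}(S)$ nor the cohomological obstruction changes.

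The hardest step is the $n=6$ construction: one must find $H$, $\phi$, and $p$ so that the obstruction cocycle is genuinely non-trivial in non-abelian $H^{2}(\RR, H)$ and remains non-trivial under any re-choice of representative in the coset $\phi H$. A secondary technical difficulty is the $n=5$ step, namely verifying that the canonical conic $C$ descends \emph{compatibly} with the descent of the pair, i.e.\ that the conic $C_{0}$ appearing in the argument is obtained by Galois-descending the equation of the unique conic through $S$.
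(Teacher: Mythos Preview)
Your proposal has genuine gaps in both halves.

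For $n=5$ in general position, you assert that the automorphism group is generically trivial and conclude that ``the moduli stack is a scheme at $[S]$''. But the theorem concerns \emph{all} five-point sets, and there exist configurations in general position with non-trivial automorphisms (e.g.\ the Veronese image of a regular pentagon on $\PP^{1}$, with $\aut\cong D_{5}$, or any $C_{2}$-symmetric quintuple on the conic). For such $S$ the residual gerbe $\cG_{S}$ is a genuine $\aut(S)$-gerbe, and nothing in your argument shows it is neutral; your conic/odd-degree step only applies \emph{after} one already has a descent to some Brauer--Severi surface. The paper handles this uniformly: it passes to the quotient $\bC=C/\cN_{S}$ of the canonical rational curve, produces an odd-degree divisor on $\bC$ from the orbit decomposition of $C\cap S$ to force $\bC\simeq\PP^{1}_{k_{S}}$, and then uses the Lang--Nishimura theorem for tame stacks on the rational map $\bC\dashrightarrow\cG_{S}$ to obtain $\cG_{S}(k_{S})\neq\emptyset$. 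Your ``collinearity decomposition'' for non-general-position $n=5$ is also too coarse: having each canonical piece descend separately to $\PP^{2}_{k_{S}}$ does not produce a coherent descent of the full configuration, since the framings need not match. The paper instead treats these cases individually, either exhibiting an explicit $\QQ$-rational normal form (the two-lines case) or reducing to four points on a fixed $\PP^{1}$ (the line-plus-point case).

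For $n\ge 6$, the extension step is wrong as stated. If $q\in\PP^{2}(\QQ)$ is in very general position, then no non-trivial element of your group $H$ fixes $q$, so $\aut_{\PP^{2}}(S\cup\{q\})$ collapses to the trivial group; the gerbe becomes neutral and $S\cup\{q\}$ descends to a Brauer--Severi surface over its field of moduli --- the opposite of what you want. Moreover the field of moduli need not remain $\RR$: you would need $\phi(q)\in S\cup\{q\}$, which fails for $q$ outside the small fixed locus of $\phi$. The paper avoids this entirely by building explicit examples for every $n\ge 6$ simultaneously: a core $F=\{(\pm 1:0:1),(0:\pm 1:1)\}$ together with $m$ pairs $(a_{i}:1:0),(1:-\bar{a}_{i}:0)$ on the line at infinity (and optionally $(0:0:1)$), chosen so that for generic $a_{i}$ one has $\aut=\langle\diag(-1,-1,1)\rangle\cong C_{2}$, whence $\cN_{S}\cong C_{4}$ and $\cG_{S}\simeq[\spec\CC/C_{4}]$, which has no $\RR$-points. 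Note that the relevant automorphism group here has order $2$, not $6$; your order-$6$ group $H$ is never specified, and the step you flag as ``hardest'' is not actually carried out.
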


Notice that $\PP^{2}_{\RR}$ is the only Brauer-Severi surface over $\RR$, hence our counterexamples do not descend to any Brauer-Severi surface over $\RR$.

\section{Notation and conventions}

Let $Z\subset\PP^{2}$ be a closed subscheme, and $g\in\PGL_{3}(K)$ a projective linear map. We say that $g$ \emph{stabilizes} $Z$, or that $Z$ is $g$-invariant, if $g(Z)=Z$. We say that $g$ \emph{fixes} $Z$ if $g$ restricts to the identity on $Z$. If $G\subset\PGL_{3}(K)$ is a finite subgroup, we say that $G$ stabilizes (resp. fixes) $Z$ if every element $g\in G$ stabilizes (resp. fixes) $Z$. The \emph{fixed locus} of $g$ (resp. $G$) is the subspace of points $x\in X$ with $gx=x$ (resp. $\forall g\in G:gx=x$).

Let $S\subset\PP^{2}(K)$ be a finite subset with finite automorphism group. Up to replacing $k$ with $k_{S}$, we may assume that $k$ is the field of moduli.

A \emph{twisted form} of $(\PP^{2}_{K},S)$ over a $k$-scheme $M$ is the datum of a projective bundle $P\to M$ and a closed subscheme $Z\subset P$ such that $(P_{K},Z_{K})$ is étale locally isomorphic to $(\PP^{2}_{K},S)\times M_{K}$ (notice that if we do not assume that $k$ is the field of moduli this definition is not correct, e.g. $(\PP^{2}_{K},S)$ would not define a twisted form of $(\PP^{2}_{K},S)$).

The fibered category $\cG_{S}$ of twisted forms of $S$ is a finite gerbe over $\spec k$ called the \emph{residual gerbe} of $S$, see \cite{giulio-angelo-moduli}. By Yoneda's lemma we have a universal projective bundle $\cP_{S}\to\cG_{S}$, its base change to $K$ is the quotient stack $[\PP^{2}/\underaut_{K}(S)]\to[\spec K/\underaut_{K}(S)]$.

Another way of constructing $\cP_{S}\to\cG_{S}$ is the following. Let $\cN_{S}\subset\aut_{k}(\PP^{2}_{K})$ be the subgroup of $k$-linear automorphisms $\tau$ of $\PP^{2}_{K}$ such that $\tau(S)=S$, the fact that $k$ is the field of moduli implies that $\cN_{S}$ is an extension of $\gal(K/k)$ by $\cN_{S}\cap\aut_{K}(\PP^{2}_{K})=\aut_{K}(\PP^{2},S)$ (see \cite[\S 3]{giulio-fmod} for details). We have an induced action of $\cN_{S}$ on $\spec K$ with the natural projection $\cN_{S}\subset\aut_{k}(\PP^{2}_{K})\to\gal(K/k)$, and the finite étale gerbe $\cG_{S}$ is the quotient stack $[\spec K/\cN_{S}]$: the natural map $\spec K\to\cG_{S}$ associated with the trivial twist of $S$ is a pro-étale, Galois covering with Galois group equal to $\cN_{S}$. Similarly, we can view $\cP_{S}$ as the quotient stack $[\PP^{2}_{K}/\cN_{S}]$.

Twisted forms of $S$ over Brauer-Severi surfaces over $k$ correspond to rational points of $\cG_{S}$. If $|S|$ is prime with $3$ and $S$ descends to a $0$-cycle over some Brauer-Severi surface, then clearly this Brauer-Severi surface is trivial since its index divides both $3$ and $|S|$.

Denote by $\bP_{S}$ the coarse moduli space of $\cP_{S}$, i.e. $\PP^{2}_{K}/\cN_{S}$, since the action of $\aut_{K}(S)$ on $\PP^{2}_{K}$ is faithful then the natural map $\cP_{S}\to\bP_{S}$ has a birational inverse $\bP_{S}\dashrightarrow\cP_{S}$ which, by composition, gives us a rational map $\bP_{S}\dashrightarrow\cG_{S}$.

\section{Case $n\le 5$}

The cases $n\le 3$ are trivial. If $n=4$, the only non-trivial case is the one in which the four points are all contained in a line, and it follows from \cite[Proposition 13]{giulio-divisor}.

Assume $n=5$. Let $S\subset \PP^{2}(K)$ be a finite subset of degree $5$ with field of moduli $k$, since $5$ is prime with $3$ it is enough to show that $\cG_{S}(k)\neq\emptyset$. We split the analysis in three cases: either $S$ contains $4$ points in general position, or it is contained in the union of two lines each containing at least three points of $S$, or it is contained in the union of a line and a point.

\subsection{$S$ contains $4$ points in general position}
Since we are assuming that there are $4$ points of $S$ in general position, there are two possibilities: either all $5$ points are in general position i.e. there is no line containing $3$ of them, or there is a unique line containing exactly $3$ points of $S$. Denote by $C$ the unique non-degenerate conic passing through all the points of $S$ in the first case, while in the second case $C$ is the unique line containing $3$ points of $S$. 

In any case, $C$ is a rational curve uniquely determined by $S$. Because of this, $\cN_{S}$ stabilizes $C$, consider the quotient $\cC=[C/\cN_{S}]\subset\cP_{S}$ and let $\bC=C/\cN_{S}$ be the coarse moduli space of $\cC$. Notice that, since $C\cap S\ge 3$, the subgroup of $\aut_{K}(\PP^{2},S)$ fixing $C$ has at most $2$ elements, hence the map $\cC\to\bC$ is either birational or generically a gerbe of degree $2$. In any case, since $\cha k\neq 2$ by the Lang-Nishimura theorem for tame stacks \cite[Theorem 4.1]{giulio-angelo-valuative} applied to a birational inverse $\bP_{S}\dashrightarrow\cP_{S}$ we get a generic section $\bC\dashrightarrow \cC$.

Since $C\cap S$ has odd degree, there exists an odd $d$ such that $C\cap S$ contains an odd number of orbits of order $d$, let $O\subset C\cap S$ be their union. Clearly, $O$ is stabilized by $\cN_{S}$, hence $O/\aut(\PP^{2},S)\subset C/\aut(\PP^{2},S)$ descends to a divisor of odd degree of $\bC$. In particular, we get that $\bC\simeq\PP^{1}_{k}$. Since we have a map $\bC\dashrightarrow \cC\to\cP_{S}\to\cG_{S}$, this implies that $\cG_{S}(k)\neq\emptyset$ if $k$ is infinite. If $k$ is finite, the statement follows from the fact that $\cN_{S}\to\gal(K/k)\simeq\hat{\ZZ}$ is split and hence $\cG_{S}(k)\neq\emptyset$.

\subsection{$S$ contained in the union of two lines}

Assume that $S$ is contained in the union of two lines $L,L'$ each containing at least $3$ points. Up to changing coordinates, we may assume that $(0:0:1),(0:1:0),(1:0:0)\in S$. It is now clear that, up to permuting the coordinates and multiplying them by scalars, we might assume that
\[S=\{(0:0:1),(0:1:0),(1:0:0),(0:1:1),(1:0:1)\},\] 
which is clearly defined over $k$.

\subsection{$S$ contained in the union of a line and a point}

Suppose that $S$ is contained in the union of a line $L$ and a point $p$, choose coordinates such that $p=(0:0:1)$ and $L=\PP^{1}$ is the line $\{(s:t:0)\}$.

The field of moduli of $(\PP^{2}_{K},S)$ is equal to the field of moduli of $(\PP^{1}_{K},S\cap \PP^{1}_{K})$: given $\sigma\in\gal(K/k)$, clearly $\sigma^{*}(\PP^{2}_{K},S)\simeq (\PP^{2}_{K},S)$ if and only if $\sigma^{*}(\PP^{1}_{K},S\cap\PP^{1}_{K})\simeq (\PP^{1}_{K},S\cap\PP^{1}_{K})$. By \cite[Proposition 13]{giulio-divisor}, $\PP^{1}_{K}\cap S$ descends to a closed subset of $\PP^{1}_{k}$. It follows that $S$ descends to a closed subset of $\PP^{2}_{k}$.

\section{Case $n\ge 6$}\label{sect:ge6}

Let us now construct a counterexample with $k=\RR$, $K=\CC$ for every $n\ge 6$.

If $n\ge 6$, then either $n=2m+4$ or $n=2m+5$ for some $m\ge 1$. Given $a_{1},\dots,a_{m}\in \CC^{*}$, $|a_{i}|\neq 1$, define
\[F=\left\{(\pm 1:0:1),(0:\pm 1:1)\right\},\]
\[S=\left\{(a_{i}:1:0),(1:-\bar{a}_{i}:0)\right\}_{i}\cup F,\]
\[S'=S\cup\left\{(0:0:1)\right\},\]
then $|S|=2m+4$, $|S'|=2m+5$. The matrix $\begin{psmallmatrix}  & -1 &   \\ 1 & &	\\ & & 1 \end{psmallmatrix}$ gives a linear equivalence between $S,S'$ and their respective complex conjugates, hence they both have field of moduli equal to $\RR$. Let us show that $S$ is not defined over $\RR$ (the case of $S'$ is analogous).

Let $M\in\PGL_{3}(\CC)$ be the image of $\begin{psmallmatrix} -1 &  &   \\  & -1 &	\\ & & ~1 \end{psmallmatrix}$. Clearly, $M$ is a non-trivial automorphism of both $S$ and $S'$. 

\begin{lemma}\label{lemma:aut}
	For a generic choice of $a_{1},\dots,a_{m}\in \CC$, $|a_{i}|\neq 1$ and $m\ge 1$, $\aut_{\CC}(\PP^{2},S)=\aut_{\CC}(\PP^{2},S')=\left<M\right>$.
\end{lemma}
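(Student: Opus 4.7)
The goal is to establish the reverse inclusion $\aut_{\CC}(\PP^{2},S) \subseteq \langle M \rangle$, since $\langle M \rangle \subseteq \aut_{\CC}(\PP^{2},S)$ is immediate from the definition of $M$. My strategy has two stages: first force any $g \in \aut_{\CC}(\PP^{2},S)$ to stabilize both the line at infinity $L_{\infty} = \{z = 0\}$ and the subset $F$, then analyze the stabilizer of $F$ in $\PGL_{3}$ under the additional constraint that $g$ preserve the infinity points of $S$.

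For generic $a_{i}$ with $|a_{i}|\neq 1$, I first claim that any line other than $L_{\infty}$ meets $S$ in at most two points. Indeed, $F$ is in general position, so a non-$L_{\infty}$ line contains at most two points of $F$ plus at most one infinity point (namely its own direction); and for the direction of a line joining two points of $F$ to lie in $S$, it would have to be one of the six specific infinity points $\{(1{:}0{:}0),(0{:}1{:}0),(1{:}{\pm} 1{:}0)\}$, which is avoided by generic $(a_{i}{:}1{:}0)$ and $(1{:}{-}\bar{a}_{i}{:}0)$. Consequently, when $m \ge 2$, the line $L_{\infty}$ is the unique line meeting $S$ in $\ge 3$ points and must be preserved by $g$; then $g$ also preserves $F = S \setminus L_{\infty}$.

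Since $F$ is a projective frame, $\aut(\PP^{2},F) \cong S_{4}$, and the subgroup preserving $L_{\infty}$ consists of the eight permutations preserving the partition $\{\{P_{1},P_{2}\},\{P_{3},P_{4}\}\}$, forming a dihedral group $D_{4}$. Explicit computation shows that the kernel of the induced action $D_{4} \to \aut(L_{\infty})$ is exactly $\langle M \rangle$ (since $M = \diag(-1,-1,1)$ fixes $L_{\infty}$ pointwise), and that the quotient Klein four-group $V$ acts on the affine coordinate $a = x/y$ of $L_{\infty}$ by the four involutions $a \mapsto \pm a^{\pm 1}$. For the image of $g$ in $V$ to stabilize $T = \{a_{i}\} \cup \{-1/\bar{a}_{i}\}$, matching each non-trivial map against $T$ produces algebraic conditions that force either $|a_{j}|=1$ for some $j$ (excluded by hypothesis) or proper closed conditions on the $a_{i}$; for instance, when $m=1$ the three non-trivial elements of $V$ respectively force $|a_{1}|=1$, $a_{1}\in i\RR$, or $a_{1}\in \RR$. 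Hence for generic $a_{i}$, $g$ lies in $\ker(D_4\to V) = \langle M \rangle$.

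The case $m = 1$ needs one additional step since $L_{\infty}$ contains only two points of $S$ and is not singled out by collinearity. However, $g$ preserves $F$ if and only if it preserves the complement $\{Q_{1},Q_{2}\}$ (bijectivity on $S$), if and only if $g$ preserves the line $L_{\infty}$ through those two points; so if $g$ preserves $F$, the previous analysis still concludes. If instead $g$ mixes $F$ and $\{Q_{1},Q_{2}\}$, then any four of the six points of $S$ remain a projective frame (by the above general-position analysis), so there are only finitely many combinatorial types of such permutations, each determining a unique candidate in $\PGL_{3}$; requiring this candidate to preserve $S$ gives finitely many polynomial equations in $a_{1}, \bar{a}_{1}$, cutting out proper closed subsets of $\CC^{*}$ avoided by generic $a_{1}$. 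The argument for $S'$ is the same after the observation that, generically, any $g \in \aut_{\CC}(\PP^{2}, S')$ must fix the unique $M$-fixed point $(0{:}0{:}1) \in S'$ (otherwise sending it to another point of $S'$ imposes further algebraic constraints), and hence restricts to an automorphism of $(\PP^{2}, S)$. The main technical obstacle is this finite case analysis for $m = 1$, which is conceptually routine but requires identifying each polynomial condition and verifying that it defines a proper closed subset of the parameter space.
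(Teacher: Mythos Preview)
Your overall architecture matches the paper's: first pin down $L_\infty$ (and hence $F$), then reduce to the dihedral group $D_4 \subset \GL_2$ stabilizing $F$ and show that its image in $\PGL_2$ (the Klein four-group acting on $L_\infty$ by $a \mapsto \pm a^{\pm 1}$) does not preserve $S\cap L_\infty$ generically. That part is correct and essentially identical to the paper; your explicit identification of the three nontrivial conditions as $|a_1|=1$, $a_1\in i\RR$, $a_1\in\RR$ is exactly what the paper's last paragraph is encoding.

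The substantive difference is in the $m=1$ step where $g$ might mix $F$ with $\{Q_1,Q_2\}$, and in the reduction for $S'$. You defer both to ``finitely many polynomial equations in $a_1,\bar a_1$ cutting out proper closed subsets'', and you correctly flag that verifying properness is the missing work. But this is a genuine gap as written: an equation in $a$ and $\bar a$ can hold identically, so one really must check, for each combinatorial type, that the resulting condition is nontrivial. The paper closes this gap with a single observation rather than a case analysis: one arranges the four-point frame determining $\phi_\sigma$ to consist of points whose coordinates are algebraic in $a$ only (three points of $F$ together with $Q_1=(a{:}1{:}0)$), so that $\phi_\sigma$ and its value on the leftover $p\in F$ are algebraic functions of $a$; since $Q_2=(1{:}-\bar a{:}0)$ involves $\bar a$, which is not an algebraic function of $a$, the equality $\phi_\sigma(p)=Q_2$ fails generically. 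This device is what makes the $m=1$ argument go through without brute force. Similarly, for $S'$ the paper does not rely on genericity at all: it notes that $x=0$ and $y=0$ are the only two lines meeting $S'$ in exactly three points, so their intersection $(0{:}0{:}1)$ is fixed by every automorphism, giving $\aut(\PP^{2},S')\subset\aut(\PP^{2},S)$ directly. Your phrase ``the unique $M$-fixed point $(0{:}0{:}1)\in S'$'' is also inaccurate as stated, since $M$ fixes all of $L_\infty$ pointwise and hence fixes $Q_1,Q_2\in S'$ as well; the paper's characterization via the two trisecant lines is both correct and cleaner.
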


\begin{proof}
	For a generic choice of $a_{1},\dots,a_{m}$, there are exactly two lines containing exactly three points of $S'$, hence their point of intersection $(0:0:1)$ is fixed by $\aut_{\CC}(\PP^{2},S')\subset\aut_{\CC}(\PP^{2},S)$. Since $M\in\aut(\PP^{2},S')$, it is enough to show $\aut(\PP^{2},S)=\left<M\right>$.
	
	Let $L=\left\{(s:t:0)\right\}$ be the line at infinity. We first show that it is stabilized by $\aut_{\CC}(\PP^{2},S)$ for a generic choice of the $a_{i}$. If $m\ge 2$ this is obvious, since it is the only line containing at least four points of $S$.
	
	Assume $m=1$, $a_{1}=a$. Since the stabilizer of $F$ in $\GL_{2}(\CC)$ is finite and acts $\CC$-linearly on $\PP^{2}$, for a generic $a$ there is no element of $\aut_{\CC}(\PP^{2},S)$ swapping $(a:1:0)$ and $(1:-\bar{a}:0)$. Assume by contradiction that $L$ is not stabilized. We may then also assume that the orbit of $(1:-\bar{a}:0)$ intersects $F$ (if this happens for $(a:1:0)$ but not $(1:-\bar{a}:0)$, we just change coordinates). 
	
	Since $M$ is an element of order $2$ of $\aut_{\CC}(\PP^{2},S)$ acting as a double transposition of $F$ and no element of $\aut_{\CC}(\PP^{2},S)$ swaps $(a:1:0)$ and $(1:-\bar{a}:0)$, it follows that there exists an element $g\in\aut_{\CC}(\PP^{2},S)$ swapping some $p\in F$ and $(1:-\bar{a}:0)$. In particular, $g$ permutes the other four points $F\cup\{(a:1:0)\}\setminus \{p\}$, we may thus think of $g$ as an element of $S_{4}$. Since the four points $F\cup\{(a:1:0)\}\setminus \{p\}$ are in general position, for each element $\sigma\in S_{4}$ there exists $\phi_{\sigma}\in\PGL_{3}(\CC)$ acting as $\sigma$ on $F\cup\{(a:1:0)\}\setminus \{p\}$, and we may write $\phi_{\sigma}$ as a $3\times 3$ matrix whose entries are algebraic functions of $a$. Since complex conjugation is not algebraic, for a generic choice of $a$ we have $\phi_{\sigma}(p)\neq (1:-\bar{a}:0)$ for every $\sigma\in S_{4}$. This implies that for a generic choice of $a$ the automorphism $g$ cannot exist (since $g(p)= (1:-\bar{a}:0)$), and hence $L$ is stabilized.
	
	If $L$ is stabilized, then $F$ is stabilized, too. The point $(0:0:1)$ is the only point of intersection in $\PP^{2}\setminus (L\cup S)$ of two lines passing through two points of $F$, hence it is fixed by $\aut_{\CC}(\PP^{2},S)$. This implies that $\aut_{\CC}(\PP^{2},S)\subset\GL_{2}(\CC)\subset\PGL_{3}(\CC)$.

	The subgroup of $\GL_{2}(\CC)$ stabilizing $F$ is $D_{4}$, hence $\aut_{\CC}(\PP^{2},S)\subset D_{4}$. The center of $D_{4}$ is $\left<-1\right>=\left<M\right>\subset\GL_{2}(\CC)$, which is also the kernel of $D_{4}\to\PGL_{2}(\CC)$. Since $D_{4}/\left<M\right>$ is finite and acts by $\CC$-linear automorphisms on $L$, for a generic choice of $a_{1},\dots,a_{m}$ the intersection $S\cap L$ is not stabilized by any non-trivial element of $D_{4}/\left<M\right>\subset\PGL_{2}(\CC)$. It follows that $\aut_{\CC}(\PP^{2},S)=\left<M\right>$.
\end{proof}

Write $C_{a}$ for the cyclic group of order $a$. Consider the natural projection $C_{4}\to\gal(\CC/\RR)=C_{2}$ giving a non-faithful action of $C_{4}$ on $\CC$ and the Galois equivariant action on $\PP^{2}_{\CC}$ given by $(a:b:c)\mapsto (-\bar{b}:\bar{a}:\bar{c})$. Clearly, $C_{4}$ stabilizes $S$, hence we get a homomorphism $C_{4}\to\cN_{S}$. Lemma~\ref{lemma:aut} implies that $\cN_{S}$ is an extension of $\gal(\CC/\RR)=C_{2}$ by $C_{2}$, hence $C_{4}\to\cN_{S}$ is an isomorphism and $\cG_{S}\simeq [\spec \CC/C_{4}]$. To conclude, it is enough to show that $[\spec \CC/C_{4}](\RR)=\emptyset$.

By definition, an $\RR$-point of $[\spec \CC/C_{4}]$ corresponds to a $C_{4}$-torsor over $\RR$ with a $C_{4}$-equivariant map to $\spec \CC$. There are two $C_{4}$-torsors over $\RR$, the trivial one and $T=\spec \CC\cup\spec \CC$, and neither of them has $C_{4}$-equivariant morphisms to $\spec \CC$. This is clear for the trivial torsor, while for $T$ it follows from the fact that $C_{2}\subset C_{4}$ acts non-trivially on each copy of $\spec \CC\subset T$.

Notice that $\bP_{S}(\RR)=\PP^{2}_{\CC}/C_{4}(\RR)$ is non-empty: however, the only real point $(0:0:1)$ is an $A_{1}$-singularity, and hence we cannot apply the Lang-Nishimura theorem for stacks to conclude that $\cP_{S}(\RR)$ is non-empty (in fact, $\cP_{S}(\RR)=\emptyset$). We mention that, for most types of $2$-dimensional quotient singularities (but not $A_{1}$-singularities), the Lang-Nishimura theorem is still valid, see \cite[\S 6]{giulio-angelo-moduli} and \cite{giulio-tqs2} for details.

\bibliographystyle{amsalpha}
\bibliography{p2}

\end{document}